\newtheorem{thm}{Theorem}[section]
\newtheorem{obe}[thm]{Remark}
\date{}
\title{\bf  Dual variational formulations for a large class of non-convex models in the calculus of variations}
\author{Fabio Silva Botelho \\  Department of Mathematics \\
Federal University of Santa Catarina \\
Florian\'{o}polis - SC, Brazil}
\begin{document}
\maketitle

\abstract{This article develops   dual variational formulations for a large class of models in variational optimization. The  results are established through basic tools of functional analysis, convex analysis and duality theory. The main duality principle is developed as an application to a Ginzburg-Landau type system in superconductivity in the absence of a magnetic field. In the first part final sections, we develop  new general   dual convex variational formulations, more specifically, dual formulations with a large region of convexity around the critical points which are suitable for the non-convex optimization for a large class of models in physics and engineering. Finally, in the last section we present some numerical results concerning the generalized method of lines applied to a Ginzburg-Landau type equation.}

\section{Introduction}

In this section we establish a  dual formulation for a large class of models in non-convex optimization.

The main duality principle is applied to the Ginzburg-Landau system in superconductivity in an absence of a magnetic field.

Such results are based on the works of J.J. Telega and W.R. Bielski \cite{2900,85,10,11} and on a D.C. optimization approach developed in Toland \cite{12}.

At this point we start to describe the primal and dual variational formulations.

Let $\Omega \subset \mathbb{R}^3$ be an open, bounded, connected set with a regular (Lipschitzian) boundary denoted by $\partial \Omega.$

For the primal formulation we consider the functional $J:U \rightarrow \mathbb{R}$ where
\begin{eqnarray}
J(u)&=& \frac{\gamma}{2}\int_\Omega \nabla u \cdot \nabla u\;dx \nonumber \\ && + \frac{\alpha}{2} \int_\Omega (u^2-\beta)^2\;dx -\langle u,f \rangle_{L^2}.
\end{eqnarray}

Here we assume $\alpha>0,\beta>0,\gamma>0$, $U=W_0^{1,2}(\Omega)$, $f \in L^2(\Omega)$. Moreover we denote
$$Y=Y^*=L^2(\Omega).$$

Define also $G_1:U \rightarrow \mathbb{R}$ by

$$G_1(u)=\frac{\gamma}{2}\int_\Omega \nabla u  \cdot \nabla u\;dx,$$
$G_2:U\times Y \rightarrow \mathbb{R}$ by

$$G_2(u,v)=\frac{\alpha}{2} \int_\Omega (u^2-\beta+v)^2\;dx+\frac{K}{2}\int_\Omega u^2\;dx,$$
and $F:U \rightarrow \mathbb{R}$ by

$$F(u)=\frac{K}{2}\int_\Omega u^2\;dx,$$
where $K\gg \gamma.$

It is worth highlighting that in such a case
$$J(u)=G_1(u)+G_2(u,0)-F(u)-\langle u,f \rangle_{L^2}, \; \forall u \in U.$$

Furthermore, define the following specific polar functionals specified, namely,
$G_1^*:[Y^*]^2 \rightarrow \mathbb{R}$ by
\begin{eqnarray}G_1^*(v_1^*+z^*)&=&\sup_{ u \in U}\left\{\langle  u,  v_1^*+z^*\rangle_{L^2}-G_1(u)\right\}
\nonumber \\ &=& \frac{1}{2}\int_\Omega [(-\gamma \nabla^2)^{-1} (v_1^*+z^*)](v_1^*+z^*)\;dx,
\end{eqnarray}

$G_2^*:[Y^*]^2 \rightarrow \mathbb{R}$ by
\begin{eqnarray}G_1^*(v_2^*,v_0^*)&=&\sup_{ (u,v) \in U\times Y}\left\{ \langle  u, v_2^*\rangle_{L^2}+\langle  v, v_0^*\rangle_{L^2}-G_2(u,v)\right\}
\nonumber \\ &=& \frac{1}{2}\int_\Omega \frac{(v_2^*)^2}{2v_0^*+K}\;dx \nonumber \\ &&+\frac{1}{2\alpha}\int_\Omega (v_0^*)^2\;dx+\beta \int_\Omega v_0^*\;dx,
\end{eqnarray}
if $v_0^* \in B^*$ where
$$B^*=\{v_0^* \in Y^*\;:\; 2v_0^*+K>K/2 \text{ in } \Omega\},$$
and finally, $F^*:Y^* \rightarrow \mathbb{R}$ by
\begin{eqnarray}F^*(z^*)&=&\sup_{ u \in U}\left\{\langle  u, z^*\rangle_{L^2}-F(u)\right\}
\nonumber \\ &=& \frac{1}{2 K}\int_\Omega (z^*)^2\;dx.
\end{eqnarray}
Define also $$A^*=\{v^*=(v_1^*,v_2^*,v_0^*) \in [Y^*]^2\times B^*\;:\; v_1^*+v_2^*-f=0, \text{ in } \Omega\},$$
$J^*:[Y^*]^4  \rightarrow \mathbb{R}$ by

$$J^*(v^*,z^*)=-G_1^*(v_1^*+z^*)-G_2^*(v_2^*,v_0^*)+F^*(z^*)$$
and $J_1^* :[Y^*]^4\times U \rightarrow \mathbb{R}$ by

$$J_1^*(v^*,z^*,u)=J^*(v^*,z^*)+\langle u,v_1^*+v_2^*-f \rangle_{L^2}.$$

\section{The main duality principle, a convex dual formulation and the concerning proximal primal functional}

Our main result is summarized by the following theorem.

\begin{thm} Considering the definitions and statements in the last section,  suppose also $(\hat{v}^*,\hat{z}^*,u_0) \in [Y^*]^2\times B^* \times Y^* \times U$ is such that $$\delta J_1^*(\hat{v}^*,\hat{z}^*,u_0)=\mathbf{0}.$$
 Under such hypotheses, we have $$\delta J(u_0)=\mathbf{0},$$ $$\hat{v}^* \in A^*$$ and
 \begin{eqnarray}
 J(u_0)&=&\inf_{u \in U}\left\{J(u)+\frac{K}{2}\int_\Omega | u- u_0|^2\;dx\right\} \nonumber \\ &=&
 J^*(\hat{v}^*,\hat{z}^*)  \nonumber \\ &=&
 \sup_{v^* \in A^*}\left\{J^*(v^*,\hat{z}^*)\right\}.
 \end{eqnarray}
\end{thm}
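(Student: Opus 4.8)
The plan is to run a Lagrangian–duality argument tailored to the difference‑of‑convex (D.C.) splitting $J=G_1+G_2(\cdot,0)-F-\langle\cdot,f\rangle$, in which the non‑convex quartic is carried by the auxiliary variable $v$ of $G_2$ and compensated by the quadratic $F$ via the parameter $K\gg\gamma$. I would first write the stationarity hypothesis $\delta J_1^*(\hat v^*,\hat z^*,u_0)=\mathbf 0$ as a system, differentiating $J_1^*$ separately in $v_1^*,v_2^*,v_0^*,z^*$ and $u$. The $u$‑variation gives instantly $\hat v_1^*+\hat v_2^*-f=0$, that is $\hat v^*\in A^*$.

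The remaining four variations produce the Legendre reciprocity relations; using $(G_1^*)'(w)=(-\gamma\nabla^2)^{-1}w$, $(F^*)'(z^*)=z^*/K$, and the partials of $G_2^*$, I expect
\begin{align}
u_0 &= (-\gamma\nabla^2)^{-1}(\hat v_1^*+\hat z^*), & \hat z^* &= K u_0, \nonumber\\
u_0 &= \frac{\hat v_2^*}{2\hat v_0^*+K}, & \hat v_0^* &= \alpha(u_0^2-\beta). \nonumber
\end{align}
From the first row I get $\hat v_1^*=-\gamma\nabla^2 u_0-Ku_0$, and from the second $\hat v_2^*=2\alpha(u_0^2-\beta)u_0+Ku_0$. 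Adding these and inserting the constraint $\hat v_1^*+\hat v_2^*=f$ cancels the two $Ku_0$ terms and leaves $-\gamma\nabla^2 u_0+2\alpha(u_0^2-\beta)u_0-f=0$, which is exactly $\delta J(u_0)=\mathbf 0$.

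Next I would invoke the Fenchel–Young equalities at the stationary point. The relations above say precisely that $\hat v_1^*+\hat z^*\in\partial G_1(u_0)$, $(\hat v_2^*,\hat v_0^*)\in\partial G_2(u_0,0)$ and $\hat z^*\in\partial F(u_0)$, so each conjugate pair saturates Young's inequality, e.g.\ $G_1(u_0)+G_1^*(\hat v_1^*+\hat z^*)=\langle u_0,\hat v_1^*+\hat z^*\rangle$, and likewise for $G_2$ and $F$. Substituting into $J(u_0)=G_1(u_0)+G_2(u_0,0)-F(u_0)-\langle u_0,f\rangle$ collapses every inner product to $\langle u_0,\hat v_1^*+\hat v_2^*-f\rangle$, which vanishes on $A^*$, giving $J(u_0)=J^*(\hat v^*,\hat z^*)$. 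The two extremal identities then follow from convexity. For the proximal infimum, expanding $\tilde J(u):=J(u)+\tfrac K2\int_\Omega|u-u_0|^2\,dx$ cancels the $\pm\tfrac K2\int_\Omega u^2\,dx$ terms and yields $\tilde J(u)=G_1(u)+G_2(u,0)-\langle u,f+Ku_0\rangle+\tfrac K2\int_\Omega u_0^2\,dx$; since the pointwise second derivative of the integrand of $G_2(\cdot,0)$ equals $6\alpha s^2-2\alpha\beta+K$, which is positive once $K>2\alpha\beta$ (this is where $K\gg\gamma$ enters), $\tilde J$ is convex, so its critical point $u_0$ is its global minimizer and $\inf_u\tilde J(u)=\tilde J(u_0)=J(u_0)$. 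For the supremum, with $z^*=\hat z^*$ fixed, $J^*(\cdot,\hat z^*)=-G_1^*(v_1^*+\hat z^*)-G_2^*(v_2^*,v_0^*)+F^*(\hat z^*)$ is concave on the convex set $A^*$, so the stationary $\hat v^*$ is the global maximizer.

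The main obstacle I anticipate is the rigorous justification of these convexity/concavity claims, and hence the upgrade from a mere critical point to the genuine global inf and sup appearing in the statement: one must pin down the threshold on $K$ that convexifies $\tilde J$, and verify that $B^*$ is exactly the region $2v_0^*+K>K/2>0$ keeping the perspective‑type term $(v_2^*)^2/(2v_0^*+K)$ jointly convex in $(v_2^*,v_0^*)$, so that $-G_2^*$ is concave there and the local stationarity conditions deliver the global extremal identities claimed.
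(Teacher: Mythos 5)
Your stationarity analysis, the derivation of $\hat{v}^*\in A^*$ and $\delta J(u_0)=\mathbf{0}$, the Fenchel--Young equalities yielding $J(u_0)=J^*(\hat{v}^*,\hat{z}^*)$, and the concavity argument for the supremum all coincide with the paper's proof; indeed, your observation that on $B^*$ the term $\int_\Omega (v_2^*)^2/(2v_0^*+K)\,dx$ is a perspective-type, jointly convex functional is a cleaner justification of the concavity of $-G_2^*$ than the paper's bare assertion that ``a simple computation'' gives a negative-definite Hessian. Where you genuinely diverge is the proximal-infimum identity. The paper never convexifies $\tilde{J}(u)=J(u)+\tfrac{K}{2}\int_\Omega|u-u_0|^2\,dx$; instead it runs a pure Fenchel--Young inequality chain: it bounds $-G_1^*(\hat{v}_1^*+\hat{z}^*)$ and $-G_2^*(\hat{v}_2^*,\hat{v}_0^*)$ from above by the corresponding brackets evaluated at an arbitrary $u$, uses the constraint $\hat{v}_1^*+\hat{v}_2^*=f$, and then adds and subtracts $F(u)$ so that, with $\hat{z}^*=Ku_0$, the leftover $F(u)-\langle u,\hat{z}^*\rangle_{L^2}+F^*(\hat{z}^*)$ is exactly $\tfrac{K}{2}\int_\Omega|u-u_0|^2\,dx$. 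This gives $J^*(\hat{v}^*,\hat{z}^*)\leq J(u)+\tfrac{K}{2}\int_\Omega|u-u_0|^2\,dx$ for every $u\in U$, with equality at $u_0$, and it does so for \emph{any} $K>0$, with no threshold. Your route --- $\tilde{J}$ is convex because the pointwise second derivative $6\alpha s^2-2\alpha\beta+K$ is positive, so its critical point $u_0$ is a global minimizer --- is also correct, but it purchases the same conclusion at the price of the extra condition $K>2\alpha\beta$, which you rightly flag and which is not literally implied by the paper's standing assumption $K\gg\gamma$ (the intent is clearly ``$K$ large,'' but $\gamma$ and $\alpha\beta$ are unrelated parameters). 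In short: same skeleton, and your two deviations roughly trade off --- the paper's Fenchel--Young estimate is sharper in hypotheses, while your convexification is more transparent about why the proximal term kills the nonconvexity, and your perspective-function remark repairs the paper's weakest step.
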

\begin{proof} Since $$\delta J_1^*(\hat{v}^*,\hat{z}^*,u_0)=\mathbf{0}$$  from the variation in $v_1^*$ we obtain
$$ -\frac{(\hat{v}_1^*+\hat{z}^*)}{-\gamma\nabla^2}+ u_0= 0 \text{ in } \Omega,$$ so that
$$\hat{v}_1^*+\hat{z}^*= -\gamma \nabla^2 u_0.$$

From the variation in $v_2^*$ we obtain
$$-\frac{\hat{v}_2^*}{2\hat{v}_0^*+K}+u_0=0, \text{ in } \Omega.$$

From the variation in $v_0^*$ we also obtain
$$\frac{(\hat{v}_2^*)^2}{(2\hat{v}_0^*+K)^2}-\frac{\hat{v}_0^*}{\alpha}-\beta=0$$
and therefore,
$$\hat{v}_0^*=\alpha(u_0^2-\beta).$$

From the variation in $u$ we get
$$\hat{v}_1^*+\hat{v}_2^*-f=0, \text{ in } \Omega$$ and thus
$$\hat{v}^* \in A^*.$$

Finally, from the variation in $z^*$, we obtain
$$-\frac{(\hat{v}_1^*+\hat{z}^*)}{-\gamma \nabla^2}+\frac{\hat{z}^*}{K}=0, \text{ in } \Omega.$$
so that $$-u_0+\frac{\hat{z}^*}{K}=0,$$ that is,
 $$\hat{z}^*=K u_0 \text{ in } \Omega.$$

From such results and $\hat{v}^* \in A^*$ we get
\begin{eqnarray}
0&=&\hat{v}_1^*+\hat{v}_2^*-f \nonumber \\ &=&-\gamma \nabla^2 u_0- \hat{z}^*+2(v_0^*)u_0+Ku_0-f \nonumber \\ &=& -\gamma\nabla^2 u_0+2\alpha(u_0^2-\beta)u_0-f,
\end{eqnarray}
so that
$$\delta J(u_0)=\mathbf{0}.$$

Also from this and from the Legendre transform proprieties we have

$$G_1^*(\hat{v}_1^*+\hat{z}^*)=\left\langle  u_0,  \hat{v}_1^*+\hat{z}^* \right\rangle_{L^2}-G_1(u_0),$$
$$G_2^*(\hat{v}_2^*,\hat{v}_0^*)=\langle  u_0, \hat{v}_2^* \rangle_{L^2}+\langle 0,v_0^* \rangle_{L^2}-G_2(u_0,0),$$
$$F^*(\hat{z}^*)=\langle  u_0,  \hat{z}^* \rangle_{L^2}-F(u_0)$$

and thus we obtain

\begin{eqnarray}
 J^*(\hat{v}^*,\hat{z}^*)
 &=& -G_1^*(\hat{v}_1^*+\hat{z}^*)-G_2^*(\hat{v}_2^*,\hat{v}_0^*)+F^*(\hat{z}^*)
\nonumber \\ &=& -\langle u_0, \hat{v}_1^*+\hat{v}_2^*\rangle +G_1(u_0)+G_2(u_0,0)-F(u_0) \nonumber \\ &=& -\langle u_0,f \rangle_{L^2}+G_1(u_0)+G_2(u_0,0)-F(u_0) \nonumber \\ &=& J(u_0).\end{eqnarray}

Summarizing, we have got
\begin{equation}\label{us5579} J^*(\hat{v}^*,\hat{z}^*)=J(u_0).\end{equation}

On the other hand
\begin{eqnarray}\label{us5580}
 J^*(\hat{v}^*,\hat{z}^*) &=&
-G_1^*(\hat{v}_1^*+\hat{z}^*)-G_2^*(\hat{v}_2^*,\hat{v}_0^*)+F^*(\hat{z}^*) \nonumber \\ &\leq&
- \langle u, \hat{v}_1^*+ \hat{z}^*\rangle_{L^2}-\langle u,\hat{v}_2^*\rangle_{L^2}-\langle 0,v_0^*\rangle_{L^2} +G_1(u)+G_2(u,0)+F^*( \hat{z}^*)
\nonumber \\ &=& -\langle u,f\rangle_{L^2}+G_1(u)+G_2(u,0)-\langle  u, \hat{z}^* \rangle_{L^2}+F^*(\hat{z}^*)
\nonumber \\ &=& -\langle u,f\rangle_{L^2}+G_1(u)+G_2(u,0)-F(u)+F(u)-\langle  u, \hat{z}^* \rangle_{L^2}+F^*(\hat{z}^*)\nonumber \\ &=&J(u)+\frac{K}{2}\int_\Omega u^2\;dx-\langle  u, \hat{z}^* \rangle_{L^2}+F^*(\hat{z}^*) \nonumber \\ &=& J(u)+\frac{K}{2}\int_\Omega u^2\;dx-K\langle  u, u_0 \rangle_{L^2}+\frac{K}{2}\int_\Omega u_0^2\;dx \nonumber \\ &=& J(u)+\frac{K}{2}\int_\Omega | u-u_0|^2\;dx,\; \forall u \in U.\end{eqnarray}

Finally by a simple computation we may obtain the Hessian $$\left\{\frac{\partial^2 J^*(v^*,z^*)}{\partial (v^*)^2}\right\}<\mathbf{0}$$ in $[Y^*]^2 \times B^* \times Y^*$, so that we may infer that $J^*$ is concave in $v^*$ in $[Y^*]^2 \times B^* \times Y^*$.

Therefore, from this, (\ref{us5579}) and
(\ref{us5580}), we have
 \begin{eqnarray}
 J(u_0)&=&\inf_{u \in U}\left\{J(u)+\frac{K}{2}\int_\Omega | u-u_0|^2\;dx\right\} \nonumber \\ &=&
 J^*(\hat{v}^*,\hat{z}^*)  \nonumber \\ &=&
 \sup_{v^* \in A^*}\left\{ J^*(v^*,\hat{z}^*)\right\}.
 \end{eqnarray}
The proof is complete.
\end{proof}
\section{ A primal dual variational formulation}

In this section we develop a more general primal dual variational formulation suitable for a large class of models in non-convex optimization.

Consider again $U=W_0^{1,2}(\Omega)$ and let $G:U \rightarrow \mathbb{R}$ and $F:U \rightarrow \mathbb{R}$ be three times Fr\'{e}chet differentiable  functionals. Let $J:U \rightarrow \mathbb{R}$ be defined by
$$J(u)=G(u)-F(u), \; \forall u \in U.$$

Assume $u_0 \in U$ is such that $$\delta J(u_0)=\mathbf{0}$$ and $$\delta^2 J(u_0) > \mathbf{0}.$$

Denoting $v^*=(v_1^*,v_2^*)$, define $J^*:U \times Y^* \times Y^* \rightarrow \mathbb{R}$ by
\begin{equation}
J^*(u,v^*)= \frac{1}{2}\|v_1^*-G'(u)\|_2^2+\frac{1}{2}\|v_2^*-F'(u)\|_2^2+\frac{1}{2}\|v_1^*-v_2^*\|_2^2\end{equation}

Denoting $L_1^*(u,v^*)=v_1^*-G'(u)$ and $L_2^*(u,v^*)=v_2^*-F'(u)$, define also
$$C^*=\left\{(u,v^*) \in U \times Y^* \times Y^*\;:\; \|L_1^*(u,v_1^*)\|_\infty\leq \frac{1}{K} \text{ and } \|L_2^*(u,v_1^*)\|_\infty\leq \frac{1}{K}\right\},$$
for an appropriate $K>0$ to be specified.

Observe that in $C^*$ the Hessian of $J^*$ is given by
 \begin{equation} \{\delta^2J^*(u,v^*)\}= \left\{
\begin{array}{lrr}
 G''(u)^2+F''(u)^2+\mathcal{O}(1/K) & -G''(u)  & -F''(u)
 \\
 -G''(u) &2 & -1
 \\
 -F''(u) & -1  & 2
 \end{array}\right\} ,\end{equation}

Observe also that $$\det\left\{\frac{\partial^2 J^*(u,v^*)}{\partial v_1^* \partial v_2^*}\right\}=3,$$
and
$$\det\{ \delta^2J^*(u,v^*)\}=(G''(u)-F''(u))^2+\mathcal{O}(1/K)= (\delta^2J(u))^2+\mathcal{O}(1/K).$$

Define now $$\hat{v}_1^*=G'(u_0),$$
$$\hat{v}_2^*=F'(u_0),$$ so that $$\hat{v}_1^*-\hat{v}_2^*=\mathbf{0}.$$

From this we may infer that $(u_0,\hat{v}_1^*,\hat{v}_2^*) \in C^*$ and $$J^*(u_0,\hat{v}^*)=0=\min_{(u,v^*) \in C^*} J^*(u,v^*).$$

Moreover, for $K>0$ sufficiently big, $J^*$ is convex in a neighborhood of $(u_0,\hat{v}^*)$.

Therefore, in the last lines, we have proven the following theorem.

\begin{thm} Under the statements and definitions of the last lines, there exist $r_0>0$ and $r_1>0$ such that
$$J(u_0)= \min_{ u \in B_{r_0}(u_0)} J(u)$$
and
$(u_0,\hat{v}_1^*,\hat{v}_2^*) \in C^*$ is such that $$J^*(u_0,\hat{v}^*)=0=\min_{(u,v^*) \in U \times [Y^*]^2} J^*(u,v^*).$$
Moreover, $J^*$ is convex in $$B_{r_1}(u_0,\hat{v}^*).$$
\end{thm}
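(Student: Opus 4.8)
The plan is to establish the three assertions of the theorem in sequence, leveraging the first-order and second-order structure of $J$ at $u_0$ together with the explicit Hessian of $J^*$ computed in the preceding lines. The starting data are that $\delta J(u_0)=\mathbf{0}$ and $\delta^2 J(u_0)>\mathbf{0}$, which is precisely the statement that $u_0$ is a nondegenerate local minimizer of $J$. First I would invoke the standard second-order sufficient condition for a local minimum: since $J$ is three times Fréchet differentiable, a Taylor expansion about $u_0$ gives
\begin{equation}
J(u)=J(u_0)+\langle \delta J(u_0),u-u_0\rangle+\tfrac{1}{2}\langle \delta^2 J(u_0)(u-u_0),u-u_0\rangle+o(\|u-u_0\|^2),
\end{equation}
and the vanishing of the first variation together with the strict positivity of the second variation forces $J(u)>J(u_0)$ for all $u$ in a punctured ball $B_{r_0}(u_0)$, for some $r_0>0$. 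This yields $J(u_0)=\min_{u\in B_{r_0}(u_0)} J(u)$.

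Next I would verify the dual identity. By the explicit definitions $\hat{v}_1^*=G'(u_0)$ and $\hat{v}_2^*=F'(u_0)$ we have $L_1^*(u_0,\hat{v}^*)=\hat{v}_1^*-G'(u_0)=\mathbf{0}$ and $L_2^*(u_0,\hat{v}^*)=\hat{v}_2^*-F'(u_0)=\mathbf{0}$, and moreover $\hat{v}_1^*-\hat{v}_2^*=G'(u_0)-F'(u_0)=\delta J(u_0)=\mathbf{0}$. Substituting into the defining formula for $J^*$ shows all three squared norms vanish, hence $J^*(u_0,\hat{v}^*)=0$. Since $J^*$ is manifestly a sum of squared $L^2$-norms it is nonnegative on the whole of $U\times[Y^*]^2$, so the value $0$ is in fact the global minimum; this upgrades the statement from $\min_{C^*}$ to $\min_{U\times[Y^*]^2}$ and simultaneously confirms $(u_0,\hat{v}^*)\in C^*$ (the constraint norms being zero, well within the bound $1/K$).

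The remaining assertion, local convexity of $J^*$ in a ball $B_{r_1}(u_0,\hat{v}^*)$, is where the real work lies and is the main obstacle. The plan is to read off positive definiteness of the Hessian $\{\delta^2 J^*(u,v^*)\}$ from its displayed block form. At the point $(u_0,\hat{v}^*)$ the leading diagonal entry is $G''(u_0)^2+F''(u_0)^2+\mathcal{O}(1/K)$, the $(v_1^*,v_2^*)$-block has determinant $3$ (hence is positive definite), and the full determinant equals $(\delta^2 J(u_0))^2+\mathcal{O}(1/K)$, which is strictly positive by hypothesis for $K$ large. I would argue positive definiteness via the leading principal minors (Sylvester's criterion): the top-left minor is positive, the $2\times 2$ minor from the $v^*$-block equals $3>0$, and the full $3\times 3$ determinant is strictly positive once $K$ is chosen large enough that the $\mathcal{O}(1/K)$ correction cannot overcome the strictly positive principal term $(\delta^2 J(u_0))^2$. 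The delicate point is that the Hessian entries, and in particular the signs of these minors, depend continuously on $(u,v^*)$ through $G''$, $F''$, and the error terms; since each minor is strictly positive at $(u_0,\hat{v}^*)$, continuity of the third derivatives guarantees a radius $r_1>0$ on which all principal minors remain positive, whence $\delta^2 J^*>\mathbf{0}$ throughout $B_{r_1}(u_0,\hat{v}^*)$ and $J^*$ is convex there. The only genuine subtlety to watch is making the choice of $K$ and the radius $r_1$ compatible, so that the ball on which convexity holds still lies inside the region $C^*$ where the stated Hessian formula is valid; this is handled by first fixing $K$ large and then shrinking $r_1$ accordingly.
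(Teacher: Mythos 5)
Your proposal is correct and follows essentially the same route as the paper, whose ``proof'' of this theorem consists of the lines preceding its statement: defining $\hat{v}_1^*=G'(u_0)$, $\hat{v}_2^*=F'(u_0)$, observing that $J^*$ is a sum of squares vanishing at $(u_0,\hat{v}^*)$, and reading local convexity off the displayed Hessian with determinant $(\delta^2 J(u))^2+\mathcal{O}(1/K)$. If anything you are more careful than the paper: your sum-of-squares observation actually justifies the minimum over all of $U\times [Y^*]^2$ (the paper's preceding discussion only asserts the minimum over $C^*$), and your Schur-complement/Sylvester argument together with continuity of the second derivatives makes explicit the convexity claim that the paper merely states.
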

\section{ One more  primal dual variational formulation for variational optimization}

Our next result is a new  primal dual variational formulation.

Consider again the functional, as defined in the previous sections, $J:U \rightarrow \mathbb{R}$ where
\begin{eqnarray}J(u)&=& \frac{\gamma}{2}\int_\Omega \nabla u \cdot \nabla u\;dx \nonumber \\ &&+\frac{\alpha}{2} \int_\Omega (u^2-\beta)^2\;dx
-\langle u,f \rangle_{L^2},
\end{eqnarray}
where $\alpha>0,\; \beta>0,\; \gamma>0$, $U=W_0^{1,2}(\Omega)$ $Y=Y^*=L^2(\Omega)$ and $f \in L^\infty(\Omega).$

Define $$B^+=\{u \in U \;:\; uf \geq 0, \text{ in } \Omega\}$$
and
$$E^+=\{u \in U\;:\; \delta^2 J(u)  >\mathbf{0}\}.$$

Observe that

\begin{eqnarray}
J(u)&=&J(u)-\frac{K}{2}\int_\Omega u^4\;dx+ \langle v_0^*,u^2 \rangle_{L^2}
\nonumber \\ &&- \langle v_0^*,u^2 \rangle_{L^2}+\frac{K}{2}\int_\Omega u^4\;dx
\nonumber \\ &\geq&J(u)-\frac{K}{2}\int_\Omega u^4\;dx+ \langle v_0^*,u^2 \rangle_{L^2}
\nonumber \\ && \inf_{u \in U}\left\{- \langle v_0^*,u^2 \rangle_{L^2}+\frac{K}{2}\int_\Omega u^4\;dx\right\} \nonumber \\ &=&
J(u)-\frac{K}{2}\int_\Omega u^4\;dx+ \langle v_0^*,u^2 \rangle_{L^2}-\frac{1}{2K}\int_\Omega (v_0^*)^2\;dx.
\end{eqnarray}

For a critical point we have the relation $$v_0^*=K u^2.$$

With such a relation in mind, we define the following approximate primal dual functional

$$J^*(u,v_0^*)=J(u)+\frac{K}{2}\int_\Omega u^4\;dx-\frac{1}{2K}\int_\Omega (v_0^*)^2\;dx+\frac{K_1}{2} \int_\Omega (v_0^*-Ku^2)^2\;dx,$$
where $K_1\gg K\gg 1.$

Observe also that

\begin{eqnarray}
\det\{\delta^2 J^*(u,v_0^*)\}&=&-\frac{\delta^2 J(u)}{K}+K_1 \delta^2J(u)-6u^2+2K_1v_0^* \nonumber \\ &&
+2K^2K_1^2u^2-2K K_1^2v_0^*.\end{eqnarray}

At this point, we define
$$A_1^+=\{(u,v_0^*) \in U\times Y^*\;:\; Ku^2-v_0^* \leq 0, \text { in } \Omega\}$$

and

$$A_2^+=\{(u,v_0^*) \in U\times Y^*\;:\; Ku^2-v_0^*= 0, \text { in } \Omega\}.$$

Define also $A^+= A_2^+ \cap C^+$, where $$C^+=\{(u,v_0^*) \in U \times Y^*\;:\; u \in  B^+\cap E^+\}.$$

Thus, we have $$\det\{\delta^2 J^*(u,v_0^*)\}=\mathcal{O}(K_1)> \mathbf{0}$$ on  $A+.$

Consider the following convex primal dual problem:

$$\text{ Locally minimize } J^*(u,v_0^*) \text{ on } A^+.$$

Define $\hat{J}^*:U \times Y^* \times Y^* \rightarrow \mathbb{R},$ by

$$\hat{J}^*(u,v_0^*,\lambda)=J^*(u,v_0^*)+\int_\Omega \lambda (Ku^2-v_0^*)\;dx,$$
where $\lambda \in Y^*$ is an appropriate Lagrange multiplier.

Considering such statements and definitions, our main result in this section is summarized by the following theorem.

\begin{thm} Let $(u_0, \hat{v}_0^*) \in A^+$ and $\lambda_0 \in Y^*$  be such that $$\delta \hat{J}^*(u_0,\hat{v}_0^*, \lambda_0)=\mathbf{0}.$$

Under such hypotheses,
$$\delta J(u_0)=\mathbf{0},$$
 and there exists $r>0$ such that

 $$\hat{J}^*(u_0,\hat{v}_0^*,\lambda_0)=J^*(u_0,\hat{v}_0^*)=\min_{(u,v_0^*) \in A^+\cap B_r(u_0,\hat{v}_0^*)} J^*(u,v_0^*)=\min_{u \in B^+\cap E^+}J(u)=J(u_0).$$
 \end{thm}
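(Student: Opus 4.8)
The plan is to read the Euler--Lagrange equation off the stationarity of $\hat{J}^*$, to observe that the augmented functional $J^*$ collapses onto $J$ along the constraint manifold $A_2^+$, and finally to convert the positivity of the already-recorded Hessian into the asserted chain of minima. First I would split $\delta \hat{J}^*(u_0,\hat{v}_0^*,\lambda_0)=\mathbf{0}$ into its three blocks. The variation in $\lambda$ merely reproduces the constraint $Ku_0^2-\hat{v}_0^*=0$, already encoded in $(u_0,\hat{v}_0^*)\in A^+\subseteq A_2^+$. The variation in $v_0^*$ gives $-\hat{v}_0^*/K+K_1(\hat{v}_0^*-Ku_0^2)-\lambda_0=0$; the middle term dies on the constraint, whence $\lambda_0=-\hat{v}_0^*/K=-u_0^2$. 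The variation in $u$ gives
$$\delta J(u_0)+2Ku_0^3-2KK_1\,u_0(\hat{v}_0^*-Ku_0^2)+2K\lambda_0 u_0=\mathbf{0};$$
the penalty term again vanishes, and substituting $\lambda_0=-u_0^2$ cancels $2Ku_0^3$ against $2K\lambda_0u_0$, leaving exactly $\delta J(u_0)=\mathbf{0}$.

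Next I would establish the value identities. On $A_2^+$ the relation $v_0^*=Ku^2$ forces $-\frac{1}{2K}\int_\Omega (v_0^*)^2\,dx=-\frac{K}{2}\int_\Omega u^4\,dx$, which cancels the $\frac{K}{2}\int_\Omega u^4\,dx$ term, while the penalty $\frac{K_1}{2}\int_\Omega (v_0^*-Ku^2)^2\,dx$ and the multiplier term $\int_\Omega \lambda(Ku^2-v_0^*)\,dx$ both vanish. Hence $J^*(u,v_0^*)=J(u)$ for every $(u,v_0^*)\in A^+$, and in particular
$$\hat{J}^*(u_0,\hat{v}_0^*,\lambda_0)=J^*(u_0,\hat{v}_0^*)=J(u_0).$$
Through the identification $(u,Ku^2)\leftrightarrow u$ between $A^+$ and $B^+\cap E^+$, minimizing $J^*$ over $A^+$ is identical to minimizing $J$ over $B^+\cap E^+$, so the entire chain reduces to the single statement that $u_0$ realizes this minimum.

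For the minimization I would use the Hessian already computed: on $A^+$ one has $\det\{\delta^2 J^*(u,v_0^*)\}=K_1(\delta^2 J(u)+2Ku^2)+\mathcal{O}(1)$, which is $\mathcal{O}(K_1)>0$ precisely because $u\in E^+$ makes $\delta^2 J(u)>\mathbf{0}$. Together with the strictly positive entry $\partial^2 J^*/\partial (v_0^*)^2=K_1-1/K>0$, a positive determinant forces positive definiteness (a symmetric $2\times 2$ block with positive determinant has diagonal entries of one sign, here positive; equivalently its Schur complement is positive). Thus $J^*$ is convex on some $B_r(u_0,\hat{v}_0^*)$ and, since $\delta \hat{J}^*=\mathbf{0}$, the point $(u_0,\hat{v}_0^*)$ is a local minimizer of $J^*$ constrained to $A^+$, giving $J^*(u_0,\hat{v}_0^*)=\min_{(u,v_0^*)\in A^+\cap B_r} J^*(u,v_0^*)$. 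Simultaneously $\delta J(u_0)=\mathbf{0}$ with $\delta^2 J(u_0)>\mathbf{0}$ exhibits $u_0$ as a local minimizer of $J$ on $B^+\cap E^+$.

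The main obstacle is the passage from this local minimality to the global value $\min_{u\in B^+\cap E^+}J(u)$ asserted in the statement. The set $E^+=\{u:\delta^2 J(u)>\mathbf{0}\}$ is cut out by an operator-positivity condition and is not manifestly convex, so the segment joining an arbitrary competitor to $u_0$ need not remain where $\delta^2 J(u)=-\gamma\nabla^2+6\alpha u^2-2\alpha\beta$ is positive; I would therefore have to show that $B^+\cap E^+$ is convex and that $J$ is convex along its segments, using the sign restriction $uf\ge 0$ defining $B^+$ together with $\delta^2 J(u_0)>\mathbf{0}$ to keep $6\alpha u^2-2\alpha\beta$ under control along the path. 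Once convexity of $J$ on $B^+\cap E^+$ is secured, $\delta J(u_0)=\mathbf{0}$ upgrades the local minimum to a global one, so that $\min_{(u,v_0^*)\in A^+\cap B_r}J^*=\min_{u\in B^+\cap E^+}J=J(u_0)$ and the whole chain of equalities closes.
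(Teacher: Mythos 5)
Your proposal follows the paper's own proof essentially step for step: the same three stationarity blocks, elimination of the multiplier on the constraint, the cancellation yielding $\delta J(u_0)=\mathbf{0}$, the collapse $\hat{J}^*=J^*=J$ on the constraint set, and positivity of $\det\{\delta^2 J^*(u,v_0^*)\}=\mathcal{O}(K_1)$ on $A^+$ to obtain local minimality of $J^*$. Two details are actually sharper in your version. First, your multiplier $\lambda_0=-u_0^2$ is the correct one; the paper writes $\lambda_0=-u_0$, which is a slip, since only $\lambda_0=-u_0^2$ makes $2K\lambda_0 u_0$ cancel $2Ku_0^3$ in the variation in $u$ (your cancellation is the computation the paper intends). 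Second, your observation that the identification $(u,Ku^2)\leftrightarrow u$ turns the minimization of $J^*$ over $A^+$ into the minimization of $J$ over $B^+\cap E^+$, together with your verification that the determinant on the constraint equals $K_1\left(\delta^2J(u)+2Ku^2\right)+\mathcal{O}(1)$, organizes the chain of equalities more transparently than the paper does.

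The one step you leave open --- upgrading local minimality at $u_0$ to $\min_{u\in B^+\cap E^+}J(u)$ --- is precisely the step the paper does not prove either: it asserts that ``from similar results in \cite{700}, we may infer that $J$ is convex on the convex set $B^+\cap E^+$'' and moves on. So your attempt is exactly as complete as the paper's own argument; the difference is that the paper closes the gap by citation while you flag it as an unresolved obligation. Your skepticism is well founded: since $\delta^2 J(u)=-\gamma\nabla^2+6\alpha u^2-2\alpha\beta$ depends on $u$ only through $u^2$, the set $E^+$ is invariant under $u\mapsto -u$, yet $\delta^2 J(0)=-\gamma\nabla^2-2\alpha\beta$ need not be positive, so $E^+$ alone is not convex in general; any convexity of $B^+\cap E^+$ must come from the interplay with the sign condition $uf\geq 0$, and neither the paper nor its citation spells this out. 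To match the paper's standard you could simply invoke the same reference at this point; to genuinely close the proof one would have to establish the convexity of $B^+\cap E^+$ and of $J$ restricted to it, which remains the real mathematical content missing from both your proposal and the paper's proof.
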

 \begin{proof}
 Since $(u_0,v_0^*) \in A_2^+$ and $u_0 \in B^+ \cap E^+$, we have that there exists $r>0$ such that
 $$\det\{\delta^2 J^*(u,v_0^*)\}=\mathcal{O}(K_1)> \mathbf{0}$$ in $B_r(u_0,\hat{v}_0^*).$

   From this, $\delta \hat{J}^*(u_0,\hat{v}_0^*,\lambda_0)=\mathbf{0}$, the other hypotheses and the expression for $\lambda_0$ indicated in the next lines,  we have that
 $$J^*(u_0,\hat{v}_0^*)=\min_{(u,v_0^*) \in A^+ \cap B_r(u_0,\hat{v}_0^*)} J^*(u,v_0^*).$$

 From the variation of $\hat{J}^*$ in $u$, we get

 \begin{equation}\label{us2012}\delta J(u_0)+2Ku_0^3+K_1(\hat{v}_0^*-Ku_0^2)(-2Ku_0)+2K\lambda_0 u_0=\mathbf{0}.\end{equation}

 From the variation of $\hat{J}^*$ in $v_0^*$, we have

 $$-\frac{\hat{v}_0^*}{K}+K_1(\hat{v}_0^*-Ku_0^2)-\lambda_0=0,$$
 Finally, from the variation of $\hat{J}^*$ in $\lambda$, we have $$\hat{v}_0^*-Ku_0^2=0$$

 Replacing such results we have obtained
 $$\lambda_0=-u_0$$ so that replacing these last results into (\ref{us2012}), we get
 $$\delta J(u_0)=\mathbf{0}.$$

 Also from $$\hat{v}_0^*-Ku_0^2=0,$$ we have

 \begin{eqnarray}\hat{J}^*(u_0,\hat{v}_0^*,\lambda_0)&=& J^*(u_0,\hat{v}_0^*) \nonumber \\ &=& J(u_0).\end{eqnarray}

 Finally, observe that from similar results in \cite{700}, we may infer that $J$ is convex on the convex set $$B^+ \cap E^+.$$

Joining the pieces, considering that $\delta J(u_0)=\mathbf{0}$, we have got
$$\hat{J}^*(u_0,\hat{v}_0^*,\lambda_0)=J^*(u_0,\hat{v}_0^*)=\min_{(u,v_0^*) \in A^+ \cap B_r(u_0,\hat{v}_0^*)} J^*(u,v_0^*)=\min_{u \in B^+\cap E^+}J(u)=J(u_0).$$

 The proof is complete.

 \end{proof}
\section{ One more duality principle and a concerning  primal dual variational formulation}

In this section we establish a new duality principle and a related  primal dual formulation.

The results are based on the approach of  Toland,  \cite{12}.

\subsection{Introduction}

Let $\Omega \subset \mathbb{R}^3$ be an open, bounded, connected set with a regular (Lipschitzian) boundary denoted by $\partial \Omega.$

Let $J:V \rightarrow \mathbb{R}$ be  a functional such that

$$J(u)=G(u)-F(u), \forall u \in V,$$
where $V=W_0^{1,2}(\Omega)$.

Suppose $G,F$ are both three times Fr\'{e}chet differentiable convex functionals such that $$\frac{\partial^2G(u)}{\partial u^2}>0$$ and
 $$\frac{\partial^2F(u)}{\partial u^2}>0$$ $\forall u \in V.$

 Assume also there exists $\alpha_1 \in \mathbb{R}$ such that

 $$\alpha_1=\inf_{u \in V} J(u).$$

 Moreover, suppose that if $\{u_n\} \subset V$ is such that $$\|u_n\|_V \rightarrow \infty$$ then
 $$J(u_n) \rightarrow +\infty, \text{ as } n \rightarrow \infty.$$

 At this point we define $J^{**}: V \rightarrow \mathbb{R}$ by
 $$J^{**}(u)=\sup_{(v^*, \alpha) \in H^*} \{ \langle u,v^*\rangle +\alpha\},$$
 where $$H^*=\{(v^*,\alpha) \in V^* \times \mathbb{R}\;:\; \langle v,v^*\rangle_V+\alpha \leq F(v),\; \forall v \in V\}.$$

Observe that $(0,\alpha_1) \in H^*,$ so that $$J^{**}(u) \geq \alpha_1=\inf_{u \in V} J(u).$$

On the other hand, clearly we have
$$J^{**}(u) \leq J(u), \; \forall u \in V,$$ so that we have got
$$\alpha_1=\inf_{u \in V} J(u)=\inf_{u \in V} J^{**}(u).$$

Let $u \in V$.

Since $J$ is strongly continuous, there exist $\delta>0$ and $A>0$ such that,

$$\alpha_1 \leq J^{**}(v) \leq J(v) \leq A, \forall v \in B_\delta(u).$$

From this, considering that $J^{**}$ is convex on $V$, we may infer that $J^{**}$ is continuous at $u$, $\forall u \in V.$

Hence $J^{**}$ is strongly lower semi-continuous on $V$, and since $J^{**}$ is convex we may infer that $J^{**}$ is weakly lower semi-continuous on $V$.

Let $\{u_n\} \subset V$ be a sequence such that

$$\alpha_1 \leq  J(u_n) < \alpha_1+\frac{1}{n},\; \forall n \in \mathbb{N}.$$

Hence $$\alpha_1=\lim_{n \rightarrow \infty} J(u_n) = \inf_{u \in V}J(u)=\inf_{u \in V}J^{**}(u).$$

Suppose there exists a subsequence $\{u_{n_k}\}$ of $\{u_n\}$ such that
$$\|u_{n_k}\|_V \rightarrow \infty, \text{ as } k \rightarrow \infty.$$

From the hypothesis we have $$ J(u_{n_k}) \rightarrow + \infty, \text{ as } k \rightarrow \infty,$$ which contradicts $$\alpha_1 \in \mathbb{R}.$$

Therefore there exists $K>0$ such that
$$\|u_n\|_V \leq K,\; \forall u \in V.$$

Since $V$ is reflexive, from this and the Katutani Theorem, there exists a subsequence $\{u_{n_k}\}$ of $\{u_n\}$ and $u_0 \in V$ such that
$$u_{n_k} \rightharpoonup u_0, \text{ weakly in } V.$$

Consequently, from this and considering that $J^{**}$ is weakly lower semi-continuous, we have got
$$\alpha_1=\liminf_{ k \rightarrow \infty}J^{**}(u_{n_k}) \geq J^{**}(u_0),$$ so that
$$J^{**}(u_0)=\min_{u \in V} J^{**}(u).$$

Define $G^*,F^*:V^* \rightarrow \mathbb{R}$ by

$$G^*(v^*)= \sup_{u \in V}\{ \langle u,v^* \rangle_V-G(u)\},$$
and
$$F^*(v^*)= \sup_{u \in V}\{ \langle u,v^* \rangle_V-F(u)\}.$$

Defining also $J^*:V \rightarrow \mathbb{R}$ by $$J^*(v^*)=F^*(v^*)-G^*(v^*),$$ from the results in \cite{12}, we may obtain

$$\inf_{u \in V} J(u)=\inf_{v^* \in V^*}J^*(v^*),$$ so that
\begin{eqnarray}J^{**}(u_0)&=& \inf_{u \in V}J^{**}(u) \nonumber \\ &=& \inf_{u \in V}J(u)=\inf_{v^* \in V^*} J^*(v^*).
\end{eqnarray}

Suppose now there exists $\hat{u} \in V$ such that $$J(\hat{u})=\inf_{u \in V} J(u).$$

From the standard necessary conditions, we have

$$\delta J(\hat{u})=\mathbf{0},$$ so that

$$\frac{\partial G(\hat{u})}{\partial u}-\frac{\partial F(\hat{u})}{\partial u}=\mathbf{0}.$$

Define now $$v_0^*=\frac{\partial F(\hat{u})}{\partial u}.$$

From these last two equations we obtain

$$v_0^*=\frac{\partial G(\hat{u})}{\partial u}.$$

From such results and the Legendre transform properties, we have
$$\hat{u}=\frac{\partial F^*(v_0^*)}{\partial v^*},$$
$$\hat{u}=\frac{\partial G^*(v_0^*)}{\partial v^*},$$
so that $$\delta J^*(v_0^*)=\frac{\partial F^*(v_0^*)}{\partial v^*}-\frac{\partial G^*(v_0^*)}{\partial v^*}=\hat{u}-\hat{u}=\mathbf{0},$$

$$G^*(v^*_0)=\langle \hat{u},v_0^*\rangle_V-G(\hat{u})$$
and
$$F^*(v^*_0)=\langle \hat{u},v_0^*\rangle_V-F(\hat{u})$$ so that
\begin{eqnarray}\inf_{u \in V} J(u)&=&J(\hat{u}) \nonumber \\ &=&G(\hat{u})-F(\hat{u}) \nonumber \\ &=& \inf_{v^* \in V^*}J^*(v^*) \nonumber \\ &=&F^*(v_0^*)-G^*(v_0^*)
\nonumber \\ &=&J^*(v_0^*).\end{eqnarray}

\subsection{The main duality principle and a related  primal dual variational formulation}
Considering these last statements and results, we may prove the following theorem.

\begin{thm}
Let $\Omega \subset \mathbb{R}^3$ be an open, bounded, connected set with a regular (Lipschitzian) boundary denoted by $\partial \Omega.$

Let $J:V \rightarrow \mathbb{R}$ be a functional such that

$$J(u)=G(u)-F(u), \forall u \in V,$$
where $V=W_0^{1,2}(\Omega)$.

Suppose $G,F$ are both three times Fr\'{e}chet differentiable  functionals such that there exists $K>0$ such that$$\frac{\partial^2G(u)}{\partial u^2}+K>0$$ and
 $$\frac{\partial^2F(u)}{\partial u^2}+K>0$$ $\forall u \in V.$

 Assume also there exists $u_0 \in V$ and  $\alpha_1 \in \mathbb{R}$ such that

 $$\alpha_1=\inf_{u \in V} J(u)=J(u_0).$$

 Assume $K_3>0$ is such that $$\|u_0\|_\infty < K_3.$$

 Define $$\tilde{V}=\{u \in V\;:\; \|u\|_\infty \leq K_3\}.$$

 Assume $K_1>0$ is such that if $u \in \tilde{V}$ then $$\max\left\{\|F'(u)\|_\infty, \|G'(u)\|_\infty,\|F''(u)\|_\infty,\; \|F'''(u)\|_\infty, \|G''(u)\|_\infty, \|G'''(u)\|_\infty \right\} \leq K_1.$$

 Suppose also $$K\gg \max\{K_1,K_3\}.$$

 Define $F_K,G_K:V \rightarrow \mathbb{R}$ by

 $$F_K(u)=F(u)+\frac{K}{2} \int_\Omega u^2\;dx,$$
 and

 $$G_K(u)=G(u)+\frac{K}{2} \int_\Omega u^2\;dx,$$
 $\forall u \in V.$

 Define also
 $G_K^*,F_K^*:V^* \rightarrow \mathbb{R}$ by

 $$G_K^*(v^*)=\sup_{ u \in V} \{ \langle u,v^* \rangle_V-G_K(u)\},$$
 and
 $$F_K^*(v^*)=\sup_{ u \in V} \{ \langle u,v^* \rangle_V-F_K(u)\}.$$

 Observe that since $u_0 \in V$ is such that
 $$J(u_0)=\inf_{u \in V} J(u),$$ we have
 $$\delta J(u_0)=\mathbf{0}.$$

 Let $\varepsilon>0$ be a small constant.

 Define $$v_0^*= \frac{\partial F_K(u_0)}{\partial u} \in V^*.$$

 Under such hypotheses, defining $J_1^*: V \times V^* \rightarrow \mathbb{R}$ by
 \begin{eqnarray}
 J_1^*(u,v^*)&=& F_K^*(v^*)-G_K^*(v^*) \nonumber \\ &&+ \frac{1}{2\varepsilon}\left\|\frac{\partial G_K^*(v^*)}{\partial v^*}-u\right\|_2^2
 + \frac{1}{2\varepsilon}\left\|\frac{\partial F_K^*(v^*)}{\partial v^*}-u\right\|_2^2 \nonumber \\ &&+
  \frac{1}{2\varepsilon}\left\|\frac{\partial G_K^*(v^*)}{\partial v^*}-\frac{\partial F_K^*(v^*)}{\partial v^*}\right\|_2^2,
  \end{eqnarray}
  we have
 \begin{eqnarray}
 J(u_0)&=& \inf_{u \in V} J(u)\nonumber \\
 &=& \inf_{ (u,v^*) \in V \times V^*} J_1^*(u,v^*) \nonumber \\ &=& J_1^*(u_0,v_0^*).
 \end{eqnarray}
 \end{thm}
\begin{proof}
Observe that from the hypotheses and the results and statements of the last subsection
$$J(u_0)=\inf_{u \in V} J(u)=\inf_{v^* \in Y^*} J_K^*(v^*)=J_K^*(v_0^*),$$
where $$J_K^*(v^*)=F_K^*(v^*)-G_K^*(v^*), \forall v^* \in V^*.$$

Moreover we have
$$J_1^*(u,v^*)\geq J_K^*(v^*), \forall u \in V,\; v^* \in V^*.$$
Also from hypotheses and the last subsection results,
$$u_0=\frac{\partial F_K^*(v_0^*)}{\partial v^*}=\frac{\partial G_K^*(v_0^*)}{\partial v^*},$$
so that clearly we have
$$J_1^*(u_0,v_0^*)=J_K^*(v_0^*).$$
From these last results, we may infer that
\begin{eqnarray}
J(u_0)&=&\inf_{u \in V} J(u) \nonumber \\ &=& \inf_{v^* \in V^*} J_K^*(v^*)
\nonumber \\ &=& J_K^*(v_0^*)\nonumber \\ &=& \inf_{(u,v^*) \in V \times V^*} J_1^*(u,v^*) \nonumber \\ &=& J_1^*(u_0,v_0^*).
\end{eqnarray}

The proof is complete.

\end{proof}
\begin{obe} At this point we highlight that $J_1^*$ has a large region of convexity around the optimal point $(u_0,v_0^*)$,  for $K>0$ sufficiently large and corresponding $\varepsilon>0$ sufficiently small.

Indeed, observe that for $v^* \in V^*$,
 $$G_K^*(v^*)=\sup_{u \in V} \{\langle u,v^*\rangle_V-G_K(u)\}=\langle \hat{u},v^*\rangle_V-G_K(\hat{u})$$
 where $\hat{u} \in V$ is such that
 $$v^*=\frac{\partial G_K(\hat{u})}{\partial u}=G'(\hat{u})+K \hat{u}.$$

 Taking the variation in $v^*$ in this last equation, we obtain
 $$1=G''(u)  \frac{\partial \hat{u}}{\partial v^*}+K \frac{\partial \hat{u}}{\partial v^*},$$
 so that
 $$\frac{\partial \hat{u}}{\partial v^*}=\frac{1}{G''(u)+K}=\mathcal{O}\left(\frac{1}{K}\right).$$

 From this we get
 \begin{eqnarray}\frac{\partial^2 \hat{u}}{\partial (v^*)^2}&=&-\frac{1}{(G''(u)+K)^2}G'''(u)\frac{\partial \hat{u}}{\partial v^*}
 \nonumber \\ &=&-\frac{1}{(G''(u)+K)^3}G'''(u) \nonumber \\ &=& \mathcal{O}\left(\frac{1}{K^3}\right).\end{eqnarray}

 On the other hand, from the implicit function theorem
 $$\frac{\partial G_K^*(v^*)}{\partial v^*}=u+[v^*-G_K'(\hat{u})]\frac{\partial \hat{u}}{\partial v^*}=u,$$
 so that
 $$\frac{\partial^2 G_K^*(v^*)}{\partial (v^*)^2}=\frac{\partial \hat{u}}{\partial v^*}=\mathcal{O}\left(\frac{1}{K}\right)$$
 and
 $$\frac{\partial^3 G_K^*(v^*)}{\partial (v^*)^3}=\frac{\partial^2 \hat{u}}{\partial (v^*)^2}=\mathcal{O}\left(\frac{1}{K^3}\right).$$

Similarly, we may obtain
$$\frac{\partial^2 F_K^*(v^*)}{\partial (v^*)^2}=\mathcal{O}\left(\frac{1}{K}\right)$$
 and
 $$\frac{\partial^3 F_K^*(v^*)}{\partial (v^*)^3}=\mathcal{O}\left(\frac{1}{K^3}\right).$$

Denoting $$A=\frac{\partial^2 F_K^*(v_0^*)}{\partial (v^*)^2}$$ and $$B=\frac{\partial^2 G_K^*(v_0^*)}{\partial (v^*)^2},$$ we have

$$\frac{\partial^2 J_1^*(u_0,v_0^*)}{\partial (v^*)^2}=A-B+\frac{1}{\varepsilon}\left(2A^2+2B^2-2AB\right),$$
$$\frac{\partial^2 J_1^*(u_0,v_0^*)}{\partial u^2}=\frac{2}{\varepsilon},$$
and
$$\frac{\partial^2 J_1^*(u_0,v_0^*)}{\partial (v^*)\partial u}=-\frac{1}{\varepsilon}(A+B).$$

From this we get

\begin{eqnarray}
\det(\delta^2 J^*(v_0^*,u_0))&=&\frac{\partial^2 J_1^*(u_0,v_0^*)}{\partial (v^*)^2}\frac{\partial^2 J_1^*(u_0,v_0^*)}{\partial u^2}-\left[\frac{\partial^2 J_1^*(u_0,v_0^*)}{\partial (v^*)\partial u}\right]^2 \nonumber \\ &=& 2\frac{A-B}{\varepsilon}+2\frac{(A-B)^2}{\varepsilon^2} \nonumber \\ &=& \mathcal{O}\left(\frac{1}{\varepsilon^2}\right) \nonumber \\ &\gg& \mathbf{0}
\end{eqnarray}
about the optimal point $(u_0,v_0^*).$
\end{obe}

\begin{obe} Denoting again $Y=Y^*=L^2(\Omega)$, we may also define the functionals $F:V \rightarrow \mathbb{R}$ and $G:V \times Y \rightarrow \mathbb{R}$, where
$$F(u)=\frac{\gamma}{2}\int_\Omega \nabla u \cdot \nabla u \;dx+ \frac{K}{2}\int_\Omega u^2\;dx-\langle u,f \rangle_{L^2}$$
and $$G(u,v)=-\frac{\alpha}{2}\int_\Omega (u^2-\beta+v)^2\;dx+\frac{K}{2}\int_\Omega u^2\;dx,$$ and the respective polar functionals
$F:Y^* \rightarrow \mathbb{R}$ and $G^*:[Y^*]^2 \rightarrow \mathbb{R}$ by
\begin{eqnarray}F^*(v^*)&=&\sup_{u \in V}\{\langle u,v^*\rangle_{L^2}-F(u)\} \nonumber \\ &=&\sup_{ u \in V}\left\{\langle u,v^*\rangle_{L^2}-\frac{\gamma}{2}\int_\Omega \nabla u \cdot \nabla u \;dx- \frac{K}{2}\int_\Omega u^2\;dx+\langle u,f \rangle_{L^2}\right\} \nonumber \\ &=& \frac{1}{2}\int_\Omega \frac{(v^*+f)^2}{-\gamma\nabla^2+K}\;dx,\end{eqnarray}
  \begin{eqnarray}G^*(v^*,v_0^*)&=&\sup_{u \in V}\left\{\inf_{v \in Y}\{\langle u,v^*\rangle_{L^2}-G(u,v)\}\right\}\nonumber \\ &=& \sup_{u \in V}\inf_{v \in L^2}\left\{\langle u,v^* \rangle_{L^2}- \langle v,v_0^*\rangle_{L^2} \right. \nonumber \\ && \left.+\frac{\alpha}{2}\int_\Omega (u^2-\beta+v)^2\;dx-\frac{K}{2}\int_\Omega u^2\;dx\right\} \nonumber \\ &=& -\frac{1}{2}\int_\Omega \frac{(v^*)^2}{(2v_0^*-K)}-\frac{1}{2\alpha}\int_\Omega (v_0^*)^2\;dx-\beta \int_\Omega v_0^*\;dx,\end{eqnarray} in $$A^*=\{ v_0^* \in Y^*\;:\; -2v_0^*+K> K/2, \text{ in } \Omega\}.$$
  Finally, we define
  $$J_2^*(v^*,v_0^*)=-F^*(v^*)+G^*(v^*,v_0^*).$$

  Observe that, at first, this functional is not convex.

  However, for an appropriate choice of $K>0$, $K_1>0$ and $K_2>0$, the extended functional $J_3^*:B^* \times A^* \rightarrow \mathbb{R}$  may be convex in $v^*$ in a large region about a critical point and concave in $v_0^*$ on $B^* \times A^*$, where $$B^*=\{v^* \in Y^*\;:\; \|v^*\|_\infty \leq K_2\},$$ and
  \begin{eqnarray} J_3^*(v^*,v_0^*)&=&J_2^*(v^*,v_0^*)+\frac{K_1}{2}\int_\Omega \left|\frac{\partial J_2^*(v^*,v_0^*)}{\partial v^*}\right|^2\;dx \nonumber \\ &=& -\frac{1}{2}\int_\Omega \frac{(v^*+f)^2}{-\gamma \nabla^2+K}\;dx-\frac{1}{2}\int_\Omega \frac{(v^*)^2}{2v_0^*-K}\;dx \nonumber \\ &&-\frac{1}{2\alpha}\int_\Omega (v_0^*)^2\;dx-\beta \int_\Omega v_0^*\;dx \nonumber \\ &&+\frac{K_1}{2}\int_\Omega\left| \frac{v^*+f}{-\gamma \nabla^2+K}
  -\frac{v^*}{-2v_0^*+K}\right|^2\;dx.\end{eqnarray}

 We highlight the appropriate choice of $K$, $K_1$ and $K_2>0$ depends on $\alpha,\beta$ and $\gamma.$

It is also worth emphasizing  the critical points of $J_2^*$ and $J_3^*$ are the same.
\end{obe}

\section{ A convex dual variational formulation for global optimization}

In this section, for $\Omega \subset \mathbb{R}^3$ open, bounded, connected and with a regular boundary $\partial \Omega$, we define again $V=W_0^{1,2}(\Omega)$, $Y=Y^*=L^2(\Omega)$ and $F:V \rightarrow \mathbb{R,}$  $G:V \times Y \rightarrow \mathbb{R}$ by

$$F(u)=\frac{\gamma}{ 2}\int_\Omega \nabla u \cdot \nabla u \;dx+\frac{K}{2}\int_\Omega u^2\;dx-\langle u,f\rangle_{L^2},$$
$$G(u,v)=-\frac{\alpha}{2}\int_\Omega (u^2-\beta+v)^2\;dx+\frac{K}{2}\int_\Omega u^2\;dx,$$
so that \begin{eqnarray}J(u)&=& \frac{\gamma}{2}\int_\Omega \nabla u \cdot \nabla u \;dx \nonumber \\ &&+\frac{\alpha}{2}\int_\Omega(u^2-\beta)^2\;dx-\langle u,f \rangle_{L^2} \nonumber \\ &=&  F(u)-G(u,0),
\end{eqnarray}
where $\gamma>0$, $\alpha>0,\; \beta>0$ and $f \in L^2(\Omega).$

Define also $F^*:Y^* \rightarrow \mathbb{R}$ by
\begin{eqnarray}F^*(v^*)&=&\sup_{ u \in V} \{\langle u,v^*\rangle_{L^2}-F(u)\} \nonumber \\ &=& \frac{1}{2}\int_\Omega \frac{(v^*+f)^2}{-\gamma \nabla^2+K}\;dx, \end{eqnarray}

\begin{eqnarray}G^*(v^*,v_0^*)&=& \sup_{u \in V}\left\{ \inf_{ v \in Y}\left\{\langle u,v^* \rangle_{L^2}-\langle v,v_0^*\rangle_{L^2}-G(u,v)\right\}\right\} \nonumber \\ &=&
-\frac{1}{2}\int_\Omega \frac{(v^*)^2}{2v_0^*-K}\;dx- \frac{1}{2\alpha}\int_\Omega (v_0^*)^2\;dx-\beta \int_\Omega v_0^*\;dx
\end{eqnarray}
if $v_0^* \in B^*,$ where
$$B^*=\{v_0^* \in Y^*\;:\; \|v_0^*\|_\infty < K_1\},$$ for an appropriate constant $K_1>0.$

Moreover define $J^*:[Y^*]^2 \rightarrow \mathbb{R}$ by
$$J^*(v^*,v_0^*)=-F^*(v^*)+G^*(v^*,v_0^*),$$
and
\begin{eqnarray}J_1^*(v^*,v_0^*)&=&J^*(v^*,v_0^*) \nonumber \\ &&+\frac{K_3}{2}\int_\Omega \left(\frac{v^*+f}{-\gamma \nabla^2+K}-\frac{v^*}{-2v_0^*+K} \right)^2\;dx \nonumber \\ && \frac{K_3}{2}\int_\Omega \left( \frac{{v}_0^*}{\alpha}-\left(\frac{ v^*+f}{-\gamma \nabla^2+K}\right)^2 +\beta\right)^2\;dx.
\end{eqnarray}
$$J_3^*(v^*)=\sup_{v_0^* \in B^*} J^*(v^*,v_0^*),$$

Observe that the critical points of $J^*$ and $J_3^*$ are the same.

On the other hand, defining $$D^*=\{v^* \in Y^* \;:\; \|v^*\|_\infty < K_2,\},$$
 $$\tilde{V}=\{ u \in V \;:\; \|u\|_\infty < K_4\},$$
 from the general result in Toland \cite{12}, for appropriate $K>0,$ $K_1>0$ and $K_2>0$, $K_3>0,$ $K_4>0$, we have
$$\inf_{u \in \tilde{V}} J(u)=\inf_{v^* \in D^*}J_3^*(v^*).$$

Since the critical points of $J_1^*$ correspond in an one to one fashion to critical points of $J_3^*$, from the Ekeland variational principle we may obtain a minimizing sequence for $J_3^*$ which corresponds to a minimizing sequence for $J_1^*$ so that, we may obtain
$$\inf_{u \in \tilde{V}} J(u)=\inf_{v^* \in D^*}J_3^*(v^*)=\inf_{(v^*,v_0^*) \in D^* \times B^*}J_1^*(v^*,v_0^*).$$
Considering these last definitions and results, we have obtained a proof of the following theorem.

\begin{thm} Considering the statements in the last lines in this section, let $\hat{v}^* \in D^*$ be such that
$$J_3^*(\hat{v}^*)=\min_{v^* \in D^*} J_3^*(v^*).$$

Assume $u_0 \in V$ and $\hat{v}_0^* \in Y^*$  such that
$$u_0= \frac{\hat{v}^*+f}{\-\gamma \nabla^2+K}$$ and $$v_0^*=\alpha(u_0^2-\beta),$$ are also such that
$$u_0 \in \tilde{V}$$ and $$v_0^* \in B^*.$$
Under such hypotheses, we have
\begin{eqnarray}
J^*_3(\hat{v}^*)&=& \inf_{v^* \in D^*} J_3^*(v^*) \nonumber \\ &=& \inf_{(v^*,v_0^*) \in D^* \times B^*} J_1^*(v^*,v_0^*)
\nonumber \\ &=& J_1^*(\hat{v}^*,\hat{v}_0^*) \nonumber \\ &=& J(u_0) \nonumber \\ &=& \inf_{ u \in \tilde{V}} J(u).
\end{eqnarray}
\end{thm}
\begin{obe} We highlight such a duality principle concerns a convex dual variational formulation suitable for a global optimization of the primal formulation, in the specific sense that, it has a large region of convexity around a critical point. Finally, we highlight such a dual formulation is applicable to  a large class of models in physics and engineering.
\end{obe}
\section{ A final convex dual variational formulation}

In this section, again for $\Omega \subset \mathbb{R}^3$ an open, bounded, connected set with a regular (Lipschitzian) boundary $\partial \Omega$, $\gamma>0,$ $\alpha>0,$ $\beta>0$ and $f \in L^2(\Omega)$,  we denote $F_1: V \times Y \rightarrow \mathbb{R}$, $F_2:V \rightarrow \mathbb{R}$ and $G: V \times Y \rightarrow \mathbb{R}$ by
\begin{eqnarray}F_1(u,v_0^*)&=&\frac{\gamma}{2}\int_\Omega \nabla u \cdot \nabla u\;dx-\frac{K}{2}\int_\Omega u^2\;dx \nonumber \\ &&
+\frac{K_1}{2}\int_\Omega (-\gamma \nabla^2 u+2v_0^*u-f)^2\;dx+\frac{K_2}{2}\int_\Omega u^2\;dx,\end{eqnarray}

$$F_2(u)=\frac{K_2}{2}\int_\Omega u^2\;dx+\langle u,f\rangle_{L^2},$$
and
$$G(u,v)=\frac{\alpha}{2}\int_\Omega (u^2-\beta+v)^2\;dx+\frac{K}{2}\int_\Omega u^2\;dx.$$

We define also $$J_1(u,v_0^*)=F_1(u,v_0^*)-F_2(u)+G(u,0),$$ $$J(u)=\frac{\gamma}{2}\int_\Omega \nabla u \cdot \nabla u\;dx+\frac{\alpha}{2}\int_\Omega (u^2-\beta)^2\;dx-\langle u,f \rangle_{L^2},$$
and
$F_1^*:[Y^*]^3 \rightarrow \mathbb{R},$ $F_2^*: Y^* \rightarrow \mathbb{R},$ and $G^*:[Y^*]^2 \rightarrow \mathbb{R},$ by
\begin{eqnarray}&& F_1^*(v_2^*,v_1^*,v_0^*) \nonumber \\ &=&\sup_{u \in V}\{ \langle u, v_1^*+v_2^* \rangle_{L^2}-F_1(u,v_0^*)\} \nonumber \\ &=&
\frac{1}{2}\int_\Omega \frac{  \left(v_1^* + v_2^* +
   K_1  (-\gamma \nabla^2+ 2 v_0^*)f\right)^2}{(-\gamma\nabla^2 - K + K_2 +K_1(-\gamma\nabla^2+2v_0^*)^2 )}\;dx \nonumber \\ &&-\frac{K_1}{2}\int_\Omega f^2\;dx,
\end{eqnarray}
\begin{eqnarray}F_2^*(v_2^*)&=& \sup_{u \in V}\{ \langle u,v_2^* \rangle_{L^2}-F_2(u)\} \nonumber \\ &=&
\frac{1}{2K_2}\int_\Omega (v_2^*)^2\;dx, \end{eqnarray}
and
\begin{eqnarray}
G^*(v_1^*,v_0^*)&=&\sup_{ (u,v) \in V\times Y}\{ \langle u,v_1^* \rangle_{L^2}-\langle v,v_0^* \rangle_{L^2}-G(u,v)\} \nonumber \\ &=& \frac{1}{2}\int_\Omega \frac{(v_1^*)^2}{2v_0^*+K}\;dx+\frac{1}{2 \alpha}\int_\Omega (v_0^*)^2\;dx
\nonumber \\ &&+\beta \int_\Omega v_0^*\;dx
\end{eqnarray}
if $v_0^* \in B^*$ where
$$B^*=\{v_0^* \in Y^*\;:\; \|v_0^*\|_\infty \leq K/2\}.$$

%Define also $$D^*=\{v_2^* \in Y^* \:;\; \|v_2^*\|<K_3$$ for some $K_3>0$ to be specified.

Finally, we also define $J_1^*:[Y^*]^2 \times B^* \rightarrow \mathbb{R},$
$$J_1^*(v^*_2,v_1^*,v_0^*)=-F_1^*(v_2^*,v_1^*,v_0^*)+F_2^*(v_2^*)-G^*(v_1^*,v_0^*).$$

By computing $\delta^2J_1^*(v_2^*,v_1^*,v_0^*)$ we may obtain that for appropriate $K>0,$ $K_1>0,$ $K_2>0$,   $J_1^*$ in convex in $v_2^*$ on $Y^*$ and it is concave in $(v_1^*,v_0^*)$ around any critical point.

Considering such statements and definitions, we may prove the following theorem.

\begin{thm} Let $(\hat{v}_2^*,\hat{v}_1^*, \hat{v}_0^*) \in Y^* \times Y^* \times B^*$ be such that
$$\delta J_1^*(\hat{v}_2^*, \hat{v}_1^*, \hat{v}_0^*)=\mathbf{0}$$
and $u_0 \in V$ be such that $$u_0=\frac{\hat{v}_1^*+\hat{v}_2^*+K_1(-\gamma \nabla^2+2v_0^*)f}{K_2-K-\gamma \nabla^2+K_1(-\gamma \nabla^2+2\hat{v}_0^*)^2}.$$

Under such hypotheses, we have
$$\delta J(u_0)=\mathbf{0},$$ so that
\begin{eqnarray}J(u_0)&=&\inf_{u \in V}\left\{J(u)+\frac{K_1}{2}\int_\Omega (-\gamma \nabla^2 u +2\hat{v}_0^*u-f)^2\;dx\right\} \nonumber \\ &=&
\inf_{v_2^* \in Y^*} \left\{ \sup_{(v_1^*,v_0^*) \in Y^* \times B^*} J_1^*(v_2^*,v_1^*,v_0^*) \right\} \nonumber \\ &=&
J_1^*(\hat{v}_2^*,\hat{v}_1^*, \hat{v}_0^*).
\end{eqnarray}
\end{thm}
\begin{proof}
Observe that $\delta J_1^*(\hat{v}_2^*, \hat{v}_1^*, \hat{v}_0^*)=\mathbf{0}$ so that, since $J_1^*$ is convex in $v_2^*$ on $Y^*$ and concave in
$(v_1^*,v_0^*)$ on $Y^* \times B^*$, we obtain
$$J_1^*(\hat{v}_2^*, \hat{v}_1^*, \hat{v}_0^*)=\inf_{v_2^* \in Y^*} \left\{ \sup_{(v_1^*,v_0^*) \in Y^* \times B^*} J_1^*(v_2^*,v_1^*,v_0^*) \right\}.$$

Now we are going to show that
$$\delta J(u_0)=\mathbf{0}.$$

From $$\frac{\partial J_1^*(\hat{v}_2^*,\hat{v}_1^*, \hat{v}_0^*)}{\partial v_2^*}=\mathbf{0},$$ we have
$$-u_0+\frac{\hat{v}_2^*}{K_2}=0,$$ and thus $$\hat{v}_2^*=K_2 u_0.$$
From $$\frac{\partial J_1^*(\hat{v}_2^*,\hat{v}_1^*, \hat{v}_0^*)}{\partial v_1^*}=\mathbf{0},$$ we obtain
$$-u_0-\frac{\hat{v}_1^*-f}{2\hat{v}_0^*+K}=0,$$ and thus $$\hat{v}_1^*=-2\hat{v}_0^*u_0-K u_0+f.$$

Finally, denoting $$D=-\gamma \nabla^2 u_0+2 \hat{v}_0^* u_0-f,$$
from  $$\frac{\partial J_1^*(\hat{v}_2^*,\hat{v}_1^*, \hat{v}_0^*)}{\partial v_0^*}=\mathbf{0},$$ we have
$$-2Du_0+u_0^2-\frac{\hat{v}_0^*}{\alpha}-\beta=0,$$ so that
\begin{equation}\label{w101}\hat{v}_0^*=\alpha(u_0^2-\beta-2Du_0).\end{equation}
Observe now that $$\hat{v}_1^*+\hat{v}_2^*+K_1(-\gamma \nabla^2+2\hat{v}_0^*)f=(K_2-K-\gamma \nabla^2+K_1(-\gamma \nabla^2+2\hat{v}_0^*)^2)u_0$$ so that
\begin{eqnarray}\label{w102}&&K_2u_0-2\hat{v}_0 u_0-Ku_0+f \nonumber \\ &=&K_2u_0-Ku_0-\gamma \nabla^2u_0 +K_1(-\gamma \nabla^2 +2\hat{v}_0^*)(-\gamma \nabla^2 u_0+2\hat{v}_0^*u_0-f).\end{eqnarray}

The solution for this last system of equations (\ref{w101}) and (\ref{w102}) is obtained through the relations
$$\hat{v}_0^*=\alpha(u_0^2-\beta)$$ and $$-\gamma \nabla^2 u_0+ 2\hat{v}_0^* u_0-f=D=0,$$
so that $$\delta J(u_0)= -\gamma \nabla^2 u_0 + 2\alpha(u_0^2-\beta)u_0-f=0$$ and
$$\delta\left\{ J(u_0)+\frac{K_1}{2}\int_\Omega (-\gamma \nabla^2 u_0 +2\hat{v}_0^*u_0-f)^2\;dx\right\}=0,$$
and hence, from the concerning convexity in $u$ on $V$,
$$J(u_0)=\min_{u \in V}\left\{ J(u)+\frac{K_1}{2}\int_\Omega (-\gamma \nabla^2 u +2\hat{v}_0^*u-f)^2\;dx\right\}.$$

Moreover, from the Legendre transform properties
$$F_1^*(\hat{v}_2^*, \hat{v}_1^*, \hat{v}_0^*)= \langle u_0, \hat{v}_2^*+\hat{v}_1^* \rangle_{L^2}-F_1(u_0, \hat{v}_0^*),$$
$$F_2^*(\hat{v}_2^*)= \langle u_0, \hat{v}_2^* \rangle_{L^2}-F_2(u_0),$$
$$G^*(\hat{v}_1^*,\hat{v}_0^*)= -\langle u_0, \hat{v}_1^* \rangle_{L^2}-\langle 0, \hat{v}_0^* \rangle_{L^2}-G(u_0,0),$$
so that
\begin{eqnarray}
J_1^*(\hat{v}_2^*, \hat{v}_1^*, \hat{v}_0^*)&=& -F_1^*(\hat{v}_2^*, \hat{v}_1^*, \hat{v}_0^*)+F_2^*(\hat{v}_2^*)-G^*(\hat{v}_1^*, \hat{v}_0^*)
\nonumber \\ &=& F_1(u_0,\hat{v}_0^*)-F_2(u_0)+G(u_0,0) \nonumber \\ &=& J(u_0).
\end{eqnarray}

Joining the pieces, we have got
\begin{eqnarray}J(u_0)&=&\inf_{u \in V}\left\{J(u)+\frac{K_1}{2}\int_\Omega (-\gamma \nabla^2 u +2\hat{v}_0^*u-f)^2\;dx \right\}\nonumber \\ &=&
\inf_{v_2^* \in Y^*} \left\{ \sup_{(v_1^*,v_0^*) \in Y^* \times B^*} J_1^*(v_2^*,v_1^*,v_0^*) \right\} \nonumber \\ &=&
J_1^*(\hat{v}_2^*,\hat{v}_1^*, \hat{v}_0^*).
\end{eqnarray}

The proof is complete.

\end{proof}

\section{A related numerical computation through the generalized method of lines }
We start by recalling that the generalized method of lines was originally introduced in the book entitled "Topics on Functional Analysis, Calculus of Variations and Duality" \cite{901}, published in 2011.

Indeed, the present results are extensions and applications of previous ones which have been published since 2011, in books and articles such as \cite{901,909,120,700}. About the Sobolev spaces involved we would mention \cite{1}. Concerning the applications, related models in physics are addressed in \cite{100,101}.

We also emphasize that, in such a method, the domain of the partial differential equation in question is discretized in lines (or more generally, in curves) and the concerning solution is written on these lines as functions of boundary conditions and the domain boundary shape.

In fact, in its previous format, this method consists of an application of a kind of a partial finite differences procedure combined with the Banach fixed point theorem to obtain the relation between two adjacent lines (or curves).

In the present article, we propose an improvement concerning the way we truncate the series solution obtained through an application of the Banach fixed point theorem to find the relation between two adjacent lines. The results obtained are very good even as a typical parameter $\varepsilon>0$ is very small.

In the next lines and sections we develop in details such a numerical procedure.

\subsection{About a concerning improvement for the generalized method of lines}

Let $\Omega \subset \mathbb{R}^2$ where
$$\Omega=\{(r,\theta) \in \mathbb{R}^2\;:\; 1 \leq r \leq 2,\; 0 \leq \theta \leq 2 \pi\}.$$

Consider the problem of solving the partial differential equation
\begin{equation}\left\{\begin{array}{ll}
-\varepsilon\left(\frac{\partial^2 u}{\partial r^2}+\frac{1}{r}\frac{\partial u}{\partial r}+\frac{1}{r^2} \frac{\partial^2 u}{\partial \theta^2}\right)  +\alpha u^3-\beta u=f,& \text{ in } \Omega,
 \\
u=u_0(\theta), & \text{ on } \partial \Omega_1, \\
u=u_f(\theta), & \text{ on } \partial \Omega_2. \end{array} \right.\end{equation}

Here $$\Omega=\{(r,\theta) \in \mathbb{R}^2\;:\; 1\leq r \leq 2,\; 0\leq \theta \leq 2\pi\},$$
$$\partial \Omega_1=\{(1,\theta) \in \mathbb{R}^2\;:\; 0\leq \theta \leq 2\pi\},$$
$$\partial \Omega_2=\{(2,\theta) \in \mathbb{R}^2\;:\; 0\leq \theta \leq 2\pi\},$$
$\varepsilon >0,\;\alpha>0,\beta>0$, and $f\equiv 1, \text{ on } \Omega.$

In a partial finite differences scheme, such a system stands for
$$-\varepsilon\left( \frac{u_{n+1}-2u_n+u_{n-1}}{d^2}+\frac{1}{t_n}\frac{u_n-u_{n-1}}{d} +\frac{1}{t_n^2}\frac{\partial^2 u_n}{\partial \theta^2}\right)+\alpha u_n^3-\beta u_n=f_n,$$
$\forall n \in \{1, \cdots,N-1\},$ with the boundary conditions $$u_0=0,$$ and $$u_N=0.$$

Here $N$ is the number of lines and $d=1/N.$

In particular, for $n=1$ we have
$$-\varepsilon\left( \frac{u_2-2u_1+u_0}{d^2}+\frac{1}{t_1}\frac{(u_1-u_0)}{d} +\frac{1}{t_1^2}\frac{\partial^2 u_1}{\partial
\theta^2}\right)+\alpha u_1^3-\beta u_1=f_1,$$

so that
$$u_1=\left(u_2+u_1+u_0+\frac{1}{t_1}(u_1-u_0)\;d +\frac{1}{t_1^2}\frac{\partial^2 u_1}{\partial
\theta^2}d^2+(-\alpha u_1^3+\beta u_1-f_1)\frac{d^2}{\varepsilon}\right)/3.0,$$

We solve this last equation through the Banach fixed point theorem, obtaining $u_1$ as a function of $u_2.$

Indeed, we may set
$$u_1^0=u_2$$
and
\begin{eqnarray}u_1^{k+1}&=&\left(u_2+u_1^k+u_0+\frac{1}{t_1}(u_1^k-u_0)\;d +\frac{1}{t_1^2}\frac{\partial^2 u_1^k}{\partial
\theta^2}d^2 \right. \nonumber \\ && \left.+(-\alpha (u_1^k)^3+\beta u_1^k-f_1)\frac{d^2}{\varepsilon}\right)/3.0,\end{eqnarray}
$\forall k \in \mathbb{N}.$

Thus, we may obtain
$$u_1=\lim_{k \rightarrow \infty}u_1^k\equiv H_1(u_2,u_0).$$

Similarly, for $n=2$, we have
\begin{eqnarray}u_2&=&\left(u_3+u_2+H_1(u_2,u_0)+\frac{1}{t_1}(u_2-H_1(u_2,u_0))\;d +\frac{1}{t_1^2}\frac{\partial^2 u_2}{\partial
\theta^2}d^2 \right. \nonumber \\ && \left.+(-\alpha u_2^3+\beta u_2-f_2)\frac{d^2}{\varepsilon}\right)/3.0,\end{eqnarray}

We solve this last equation through the Banach fixed point theorem, obtaining $u_2$ as a function of $u_3$ and $u_0.$

Indeed, we may set
$$u_2^0=u_3$$
and
\begin{eqnarray}u_2^{k+1}&=&\left(u_3+u_2^k+H_1(u_2^k,u_0)+\frac{1}{t_2}(u_2^k-H_1(u_2^k,u_0))\;d +\frac{1}{t_2^2}\frac{\partial^2 u_2^k}{\partial
\theta^2}d^2 \right. \nonumber \\ && \left.+(-\alpha (u_2^k)^3+\beta u_2^k-f_2)\frac{d^2}{\varepsilon}\right)/3.0,\end{eqnarray}
$\forall k \in \mathbb{N}.$

Thus, we may obtain
$$u_2=\lim_{k \rightarrow \infty}u_2^k\equiv H_2(u_3,u_0).$$

Now reasoning inductively, having $$u_{n-1}=H_{n-1}(u_n,u_0),$$ we may get
\begin{eqnarray}u_n&=&\left(u_{n+1}+u_{n}+H_{n-1}(u_n,u_0)+\frac{1}{t_n}(u_n-H_{n-1}(u_n,u_0))\;d +\frac{1}{t_n^2}\frac{\partial^2 u_n}{\partial
\theta^2}d^2 \right. \nonumber \\ && \left.+(-\alpha u_n^3+\beta u_n-f_n)\frac{d^2}{\varepsilon}\right)/3.0,\end{eqnarray}

We solve this last equation through the Banach fixed point theorem, obtaining $u_n$ as a function of $u_{n+1}$ and $u_0.$

Indeed, we may set
$$u_n^0=u_{n+1}$$
and
\begin{eqnarray}u_n^{k+1}&=&\left(u_{n+1}+u_n^k+H_{n-1}(u_n^k,u_0)+\frac{1}{t_n}(u_n^k-H_{n-1}(u_n^k,u_0))\;d +\frac{1}{t_n^2}\frac{\partial^2 u_n^k}{\partial
\theta^2}d^2 \right. \nonumber \\ && \left. +(-\alpha (u_n^k)^3+\beta u_n^k-f_n)\frac{d^2}{\varepsilon}\right)/3.0,\end{eqnarray}
$\forall k \in \mathbb{N}.$

Thus, we may obtain
$$u_n=\lim_{k \rightarrow \infty}u_n^k\equiv H_n(u_{n+1},u_0).$$

We have obtained $u_n=H_n(u_{n+1},u_0)$, $\forall n \in \{1,\cdots,N-1\}.$

In particular, $u_{N}=u_f(\theta),$ so that we may obtain
$$u_{N-1}=H_{N-1}(u_N,u_0)=H_{N-1}(0)\equiv F_{N-1}(u_N,u_0)=F_{N-1}(u_f(\theta),u_0(\theta)).$$

Similarly,
$$u_{N-2}=H_{N-2}(u_{N-1},u_0)=H_{N-2}(H_{N-1}(u_N,u_0))=F_{N-2}(u_N,u_0)=F_{N-1}(u_f(\theta), u_0(\theta)),$$
an so on, up to obtaining
$$u_1=H_1(u_2)\equiv F_1(u_N,u_0)=F_1(u_f(\theta),u_0(\theta)).$$

The problem is then approximately solved.

\subsection{ Software in Mathematica for  solving such an equation}
We recall that the equation to be solved is a Ginzburg-Landau type one, where

\begin{equation}\left\{\begin{array}{ll}
-\varepsilon\left(\frac{\partial^2 u}{\partial r^2}+\frac{1}{r}\frac{\partial u}{\partial r}+\frac{1}{r^2} \frac{\partial^2 u}{\partial \theta^2}\right)  +\alpha u^3-\beta u=f,& \text{ in } \Omega,
 \\
u=0, & \text{ on } \partial \Omega_1, \\
u=u_f(\theta), & \text{ on } \partial \Omega_2. \end{array} \right.\end{equation}

Here $$\Omega=\{(r,\theta) \in \mathbb{R}^2\;:\; 1\leq r \leq 2,\; 0\leq \theta \leq 2\pi\},$$
$$\partial \Omega_1=\{(1,\theta) \in \mathbb{R}^2\;:\; 0\leq \theta \leq 2\pi\},$$
$$\partial \Omega_2=\{(2,\theta) \in \mathbb{R}^2\;:\; 0\leq \theta \leq 2\pi\},$$
$\varepsilon >0,\;\alpha>0,\beta>0$, and $f\equiv 1, \text{ on } \Omega.$
In a partial finite differences scheme, such a system stands for
$$-\varepsilon\left( \frac{u_{n+1}-2u_n+u_{n-1}}{d^2}+\frac{1}{t_n}\frac{u_n-u_{n-1}}{d} +\frac{1}{t_n^2}\frac{\partial^2 u_n}{\partial \theta^2}\right)+\alpha u_n^3-\beta u_n=f_n,$$
$\forall n \in \{1, \cdots,N-1\},$ with the boundary conditions $$u_0=0,$$ and $$u_N=u_f[x].$$

Here $N$ is the number of lines and $d=1/N.$

At this point we present the concerning software for an approximate solution.

Such a software is for $N=10$ (10 lines) and $u_0[x]=0.$.
\\
\\
\\
*************************************
\begin{enumerate}
\item $m_8=10$;\; $(N=10 \;lines)$
\item $d=1/m8$;
\item $e_1=0.1$; ($\varepsilon=0.1)$
\item $A=1.0$;
\item $B=1.0$;
\item $For[i=1,i<m8,i++, f[i]=1.0];$ \;$(f\equiv 1, \text{ on } \Omega)$
\item $a=0.0$;
\item $For[i=1,i<m8,i++,$

$Clear[b,u];$

$t[i]=1+i*d;$

$b[x_-]=u[i+1][x];$

\item $For[k=1,k<30,k++,$\;\; $\text{(we have fixed the number of iterations)}$

$z=\left(u[i+1][x]+b[x]+a+\frac{1}{t[i]}(b[x]-a)*d \right.\\
\left.+\frac{1}{t[i]^2}D[b[x],\{x,2\}]*d^2+(-A*b[x]^3+B*u[x]+f[i])*\frac{d^2}{e_1}\right)/3.0;$

$z=\\
Series[z,\{u[i+1][x],0,3\},\{u[i+1]'[x],0,1\},\{u[i+1]''[x],0,1\},
\\
\{u[i+1]'''[x],0,0\},\{u[i+1]''''[x],0,0\}];$

$z=Normal[z],$

$z=Expand[z];$

$b[x_-]=z];$

\item $a_1[i]=z;$

\item $Clear[b];$

\item $u[i+1][x_-]=b[x]$;

\item $a=a_1[i]$\;];

\item $b[x_-]=u_f[x];$

\item $For[i=1,i<m8,i++,$

$A_1=a_1[m8-i];$

$A_1=Series[A_1,\{u_f[x],0,3\},\{u_f'[x],0,1\},\{u_f''[x],0,1\},\{u_f'''[x],0,0\},\{u_f''''[x],0,0\}];$

$A_1=Normal[A_1];$

$A_1=Expand[A_1];$

$u[m8-i][x_-]=A_1;$

$b[x_-]=A_1];$

$Print[u[m8/2][x]];$

\end{enumerate}

*************************************

The numerical expressions for the solutions of the concerning $N=10$ lines are given by
\begin{eqnarray}
u[1][x]&=&0.47352 +0.00691 u_f[x]-0.00459 u_f[x]^2+0.00265 u_f[x]^3+0.00039 (u_f'')[x]
\nonumber \\ &&-0.00058 u_f[x] (u_f'')[x]+0.00050 u_f[x]^2 (u_f'')[x]-0.000181213 u_f[x]^3 (u_f'')[x]\end{eqnarray}

\begin{eqnarray}
u[2][x]&=&0.76763 +0.01301 u_f[x]-0.00863 u_f[x]^2+0.00497 u_f[x]^3+0.00068 (u_f'')[x]
\nonumber \\ &&-0.00103 u_f[x] (u_f'')[x]+0.00088 u_f[x]^2 (u_f'')[x]-0.00034 u_f[x]^3 (u_f'')[x]
\end{eqnarray}

\begin{eqnarray}u[3][x]&=&0.91329 +0.02034 u_f[x]-0.01342 u_f[x]^2+0.00768 u_f[x]^3+0.00095 (u_f'')[x]
\nonumber \\ &&-0.00144 u_f[x] (u_f'')[x]+0.00122 u_f[x]^2 (u_f'')[x]-0.00051 u_f[x]^3 (u_f'')[x] \end{eqnarray}

\begin{eqnarray}u[4][x]&=&0.97125 +0.03623 u_f[x]-0.02328 u_f[x]^2+0.01289 u_f[x]^3 +0.00147331 (u_f'')[x]\nonumber \\ &&-0.00223 u_f[x] (u_f'')[x]+0.00182 uf[x]^2 (u_f'')[x]-0.00074 u_f[x]^3 (u_f'')[x]\end{eqnarray}

\begin{eqnarray}
u[5][x]&=&1.01736 +0.09242 u_f[x]-0.05110 u_f[x]^2+0.02387 u_f[x]^3+0.00211 (u_f'')[x]
\nonumber \\ &&-0.00378 u_f[x] (u_f'')[x]+0.00292 u_f[x]^2 (u_f'')[x]-0.00132 u_f[x]^3 (u_f'')[x]
\end{eqnarray}

\begin{eqnarray}
u[6][x]&=&1.02549 +0.21039 u_f[x]-0.09374 u_f[x]^2+0.03422 u_f[x]^3+0.00147 (u_f'')[x] \nonumber \\ &&-0.00634 u_f[x] (u_f'')[x]+0.00467 u_f[x]^2 (u_f'')[x]-0.00200 u_f[x]^3 (u_f'')[x]
\end{eqnarray}

\begin{eqnarray}
u[7][x]&=&0.93854 +0.36459 u_f[x]-0.14232 u_f[x]^2+0.04058 u_f[x]^3+0.00259 (u_f'')[x]
\nonumber \\ &&-0.00747373 u_f[x] (u_f'')[x]+0.0047969 u_f[x]^2 (u_f'')[x]-0.00194u_f[x]^3 (u_f'')[x]
\end{eqnarray}

\begin{eqnarray}
u[8][x]&=&0.74649 +0.57201 u_f[x]-0.17293 u_f[x]^2+0.02791 u_f[x]^3+0.00353 (u_f'')[x]
\nonumber \\ &&-0.00658 u_f[x] (u_f'')[x]+0.00407 u_f[x]^2 (u_f'')[x]-0.00172 u_f[x]^3 (u_f'')[x]
\end{eqnarray}

\begin{eqnarray}
u[9][x]&=&0.43257 +0.81004 u_f[x]-0.13080 u_f[x]^2+0.00042 u_f[x]^3+0.00294 (u_f'')[x]\nonumber \\ &&-0.00398 u_f[x] (u_f'')[x]+0.00222 u_f[x]^2 (u_f'')[x]-0.00066 u_f[x]^3 (u_f'')[x]
\end{eqnarray}

\section{Conclusion} In the first part of this article we develop duality principles for non-convex variational optimization. In the final concerning sections we propose dual convex formulations suitable for a large class of models in physics and engineering. In the last  article section,  we present an advance concerning the computation of a solution for a partial differential equation through the generalized method of lines. In particular, in its previous versions, we used to truncate the series in $d^2$ however, we have realized the results are much better  by taking line solutions in series for $u_f[x]$ and its derivatives, as it is indicated in the present software.

This is a little difference concerning the previous procedure, but with a great result improvement as the parameter $\varepsilon>0$ is small.

Indeed, with a sufficiently large $N$ (number of lines),  we may obtain very good results even as $\varepsilon>0$ is very small.

\end{document}